\newtheorem{theorem}{Theorem}
\theoremstyle{plain}
\newtheorem{definition}{Definition}
\newtheorem{example}{Example}
\newtheorem{proposition}{Proposition}
\newtheorem{remark}{Remark}
\numberwithin{equation}{section}
\def\DD{\mathcal{D}}
\def\bX{{\mathbf X}}
\def\bY{{\mathbf Y}}
\def\XX{{\mathbb X}}
\def\YY{{\mathbb Y}}
\def\/{|\!|\!|}
\begin{document}
\title{Physical Brownian motion in magnetic field as rough path}
\author{Peter Friz, Paul Gassiat, Terry Lyons}

\begin{abstract}
The indefinite integral of the homogenized Ornstein-Uhlenbeck process is a
well-known model for physical Brownian motion, modelling the behaviour of an
object subject to random impulses [L. S. Ornstein, G. E. Uhlenbeck: On the
theory of Brownian Motion. In: Physical Review. 36, 1930, 823-841]. One can
scale these models by changing the mass of the particle and in the small
mass limit one has almost sure uniform convergence in distribution to the
standard idealized model of mathematical Brownian motion. This provides one
well known way of realising the Wiener process. However, this result is less
robust than it would appear and important generic functionals of the
trajectories of the physical Brownian motion do not necessarily converge to
the same functionals of Brownian motion when one takes the small mass limit.
In presence of a magnetic field the area process associated to the physical
process converges - but not to L\'evy's stochastic area. As this area is
felt generically in settings where the particle interacts through force
fields in a nonlinear way, the remark is physically significant and
indicates that classical Brownian motion, with its usual stochastic
calculus, is not an appropriate model for the limiting behaviour.

We compute explicitly the area correction term and establish convergence, in
the small mass limit, of the physical Brownian motion in the rough path
sense. The small mass limit for the motion of a charged particle in the
presence of a magnetic field is, in distribution, an easily calculable, but
"non-canonical" rough path lift of Brownian motion. Viewing the trajectory
of a charged Brownian particle with small mass as a rough path is
informative and allows one to retain information that would be lost if one
only considered it as a classical trajectory. We comment on the importance
of this point of view.

\end{abstract}

\subjclass{Primary 60H99}
\keywords{Physical Brownian motion, Homogenization, L\'{e}vy's area}
\thanks{The research of PF and PG is supported by the European Research
Council under the European Union's Seventh Framework Programme
(FP7/2007-2013) / ERC grant agreement nr. 258237. The research of TL is
supported by EPSRC grant EP/H000100/1 and the European Research Council
under the European Union's Seventh Framework Programme (FP7-IDEAS-ERC) / ERC
grant agreement nr. 291244.}
\maketitle

\section{Introduction}

Newton's second law for a particle in $\mathbb{R}^{3}$ with mass $m$, and
position $x=x\left( t\right) ,$ (for simplicity: constant) frictions $\alpha
_{1},\alpha _{2},\alpha _{3}>0$ in the coordinate axis, subject to a ($3$%
-dimensional) white noise in time, $\xi$, where $\xi=\xi(t)$ is the
distributional derivative of $W$, a (mathematical)\ Brownian motion or
Wiener process, reads%
\begin{equation}
m\ddot{x}=-A\dot{x}+ \xi  \label{newton}
\end{equation}%
where $A=diag\left( \alpha _{1},\alpha _{2},\alpha _{3}\right) $.
Orthonormal change of coordinates implies that the "correct" assumption for $%
A$ is to be symmetric with strictly positive spectrum,%
\begin{equation*}
\sigma \left( A\right) \subset \left( 0,\infty \right) .
\end{equation*}%
The process $x\left( t\right) $ describes what is known as \textit{physical
Brownian motion}. It is well-known that in small mass regime, $m<<1$, of
obvious physcial relevance when dealing with particles, a good approximation
is given by (mathematical) \textit{Brownian motion}; to see this formally,
it suffices to take $m=0$ in (\ref{newton}) in which case $Ax$ is a standard
Brownian motion in $\mathbb{R}^{3}$.

Let us now assume that our particle (with position $x$ and momentum $m\dot{x}
$) carries an electric charge $q\neq 0$ and moves in a magnetic field $%
\mathbb{B}$ which we assume to be constant. Recall that such a particle
experiences a sideways force ("Lorentz force") that is proportional to the
strength of the magnetic field, the component of the velocity that is
perpendicular to the magnetic field and the charge of the particle.%
\begin{equation*}
F_{Lorentz}=q\dot{x}\times \mathbb{B}\text{.}
\end{equation*}%
When $\mathbb{B}$ is constant, which we assume for simplicity, the Lorentz
force experienced by the particle (at time $t$) can be written as as linear
function of velocity $\dot{x}=\dot{x}\left( t\right) $, namely $qB\dot{x}$
for some anti-symmetric matrix $B$. In other words, the dynamics for \textit{%
physical Brownian motion in a magnetic field} are given by 
\begin{eqnarray*}
m\ddot{x} &=&-A\dot{x}+qB\dot{x}+ \xi \\
&\equiv &-M\dot{x}+\xi)
\end{eqnarray*}%
where $M$ $=$ $(A+qB)$ is such that all eigenvalues of $M\in \mathbb{R}%
^{n\times n}$ have strictly positive real part (one may still think $n=3$,
but the subsequent analysis works for any dimension $n$). Note that these
second order dynamics can be rewritten as evolution equation for the \textit{%
momentum} 
\begin{equation*}
p\left( t\right) =m\dot{x}\left( t\right),
\end{equation*}
indeed, 
\begin{equation*}
\dot{p}=-M\dot{x}+\xi = -\frac{1}{m}M{p}+\dot{W},
\end{equation*}%
and we shall take this point of view when rewriting the dynamics in term of
standard stochastic differential equations. As before, when $m=0$, the
process $Mx=W$ is a bona fide (i.e. mathematical) $3$-dimensional Brownian
motion and one may think that little has changed, appart from the covariance
matrix of the resulting Brownian motion. And indeed, writing $\xi =\dot{W}$,
and assuming $x_{0}=W_{0}=0$, it is easy to see that%
\begin{equation*}
W_{t}-Mx_{t}=\int_{0}^{t}\left( -M\dot{x}_{s}+\xi _{s}\right) ds=\int_{0}^{t}%
\dot{p}=(p_{t}-p_{0})\rightarrow 0\text{ as mass }m\rightarrow 0\text{.}
\end{equation*}%
Note that 
\begin{equation*}
p_{0}=m\dot{x}\left( 0\right) \rightarrow 0
\end{equation*}
as $m\rightarrow 0$, whenever initial velocity remains uniformly bounded, so
the statement here is that $p_{t}\rightarrow 0$ as $m\rightarrow 0$ and one
can easily see that this convergence is uniform (we are only interested in a
fixed time horizon, say $\left[ 0,T\right] $). However, the momentum may
have quite non-trivial effects as \textit{control.} Recalling that
controlled system (differential equations, integrals ...) behave in a robust
fashion precisly under rough path metrics, the essence of which has been
briefly summarized for the reader's convenience in the appendix, the
following lemma tells us that momentum does not at all converge to a trivial
limit. The situation is reminiscent a well-known deterministic example,
taken from \cite{L98}, where the path%
\begin{equation*}
Z_{t}:=\frac{1}{\sqrt{m}}\exp \left( 2\pi itm\right) \in \mathbb{C\simeq R}%
^{2},
\end{equation*}%
converges to a non-trivial "pure area" rough path as $m\rightarrow 0$.
(See, however, remark \ref{rmkFBM} where we emphasize the special
role of Brownian motion, notably the need for its intrinisic irregularity,
in the proposition and theorem below.)
\begin{proposition}
One has (the iterated integrals are understood in It\^{o} sense)%
\begin{equation*}
\left( P_{s,t},\mathbb{P}_{s,t}\right) :=\left(
\int_{s}^{t}dp,\int_{s}^{t}\int_{s}^{r}dp\otimes dp\right) \rightarrow
\left( 0,\frac{MC-CM^{\ast }}{2}\left( t-s\right) \right) \text{ as }%
m\rightarrow 0\text{.}
\end{equation*}%
and $C$ is the (symmetric) $n\times n$ matrix defined by 
\begin{equation*}
C=\int_{0}^{\infty }e^{-Ms}e^{-M^{\ast }s}ds.
\end{equation*}
More precisely, the convergence holds in the following strong sense: for any 
$\alpha \in (1/3,1/2)$, 
\begin{equation}
\sup_{s,t\in \lbrack 0,T]}\frac{\left\vert P_{s,t}\right\vert }{%
|t-s|^{\alpha }}+\sup_{s,t\in \lbrack 0,T]}\frac{\left\vert \mathbb{P}_{s,t}-%
\frac{MC-CM^{\ast }}{2}\left( t-s\right) \right\vert }{|t-s|^{2\alpha }}%
\rightarrow 0\mbox{ in }L^{q},\mbox{ as }m\downarrow 0.
\end{equation}%
(This is precisely what is meant by convergence in $\alpha $-H\"{o}lder
rough path metric.)
\end{proposition}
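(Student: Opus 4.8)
The plan is to reduce both assertions to pointwise moment estimates that are uniform in the mass $m$ and carry the correct joint scaling in $m$ and $|t-s|$, and then feed these into a Garsia--Rodemich--Rumsey/Kolmogorov criterion for convergence of rough paths. The momentum solves the linear equation $dp=-\tfrac1m Mp\,dt+dW$, so $p_t=e^{-tM/m}p_0+\int_0^t e^{-(t-u)M/m}\,dW_u$; the time change $t\mapsto t/m$ rewrites this as $p_t=e^{-(t/m)M}p_0+\sqrt m\,Y_{t/m}$, where $Y$ is the \emph{mass-one} Ornstein--Uhlenbeck process $dY=-MY\,dt+dW$, independent of $m$, whose covariance is governed by $C=\int_0^\infty e^{-Ms}e^{-M^{\ast}s}\,ds$ (which also solves $MC+CM^{\ast}=\Id$) and decays exponentially. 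Since $p_0=m\dot x_0=O(m)$, the transient $e^{-(t/m)M}p_0$ is $O(m)$ and exponentially damped and contributes only lower-order errors; I will dispose of it with crude bounds and work with the stationary part.

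For the first level, $P_{s,t}=p_t-p_s$ is Gaussian and the explicit kernel gives $\E|P_{s,t}|^2=O(\min(m,|t-s|))$ — Brownian-like for $|t-s|\ll m$, and $\approx 2mC$ once the two times decorrelate. As $\min(m,\delta)\le m^{1-2\beta}\delta^{2\beta}$ for every $\beta<1/2$, equivalence of moments on the first Wiener chaos yields $\E|P_{s,t}|^{2k}\le C_k\,m^{(1-2\beta)k}|t-s|^{2\beta k}$; choosing $\beta\in(\alpha,1/2)$ and $k$ large, the Kolmogorov estimate gives $\E\big[(\sup_{s,t}|P_{s,t}|/|t-s|^{\alpha})^{2k}\big]\le C\,m^{(1-2\beta)k}\to 0$, i.e. the first term of the displayed quantity vanishes in every $L^q$.

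For the second level, write $\mathbb{P}_{s,t}=\int_s^t(p_r-p_s)\otimes dp_r$ (It\^o) and split $dp=-\tfrac1m Mp\,dr+dW$, so $\mathbb{P}_{s,t}=-\tfrac1m\int_s^t(p_r-p_s)\otimes Mp_r\,dr+\int_s^t(p_r-p_s)\otimes dW_r$. The limiting matrix is read off from the mean: using $\E[p_r\otimes p_r]\to mC$ and $\E[p_s\otimes p_r]=mC\,e^{-(r-s)M^{\ast}/m}$ one finds $\E[\mathbb{P}_{s,t}]=(t-s)L+O(\min(m,|t-s|))$ uniformly, with $L$ the matrix in the statement. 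The mean-zero part $\mathbb{P}_{s,t}-\E[\mathbb{P}_{s,t}]$ lies in Wiener chaos of order $\le 2$, so by hypercontractivity it suffices to control $\mathrm{Var}(\mathbb{P}_{s,t})$. After rescaling, $\mathbb{P}_{s,t}=m\,\mathbb{A}_{s/m,\,t/m}+O(m)$, where $\mathbb{A}_{\sigma,\tau}=\int_\sigma^\tau(Y_u-Y_\sigma)\otimes dY_u$ is the area of the mass-one O--U process over an interval of length $\ell=(t-s)/m$; the exponential mixing of $Y$, through a Wick computation / ergodic averaging, yields $\mathrm{Var}(\mathbb{A}_{\sigma,\tau})=O(\ell)$ for $\ell\ge1$ and $O(\ell^2)$ for $\ell\le1$, hence $\mathrm{Var}(\mathbb{P}_{s,t})=O(\min(m|t-s|,|t-s|^2))\le C\,m^{2-4\beta}|t-s|^{4\beta}$ for every $\beta<1/2$. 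Combining with the mean bound and promoting via hypercontractivity, $\E|\mathbb{P}_{s,t}-L(t-s)|^{2k}\le C_k\,m^{(2-4\beta)k}|t-s|^{4\beta k}$. Since $(P,\mathbb{P})$ and $(0,L\,\cdot)$ both satisfy Chen's relation, the Kolmogorov criterion for rough paths then bounds $\E\big[\rho_{\alpha}\big((P,\mathbb{P}),(0,L\,\cdot)\big)^q\big]$ by $C\,m^{\kappa q}$ with $\kappa=\kappa(\alpha,\beta)>0$ (taking $\beta\in(\alpha,1/2)$, $k$ large), which tends to $0$: this is precisely the claimed convergence in $\alpha$-H\"older rough path metric.

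The main obstacle is the variance bound for the second-level object $\mathbb{P}_{s,t}$ — showing it is genuinely of size $m\wedge\ldots$ rather than $O(1)$, with the right power of $|t-s|$, uniformly across the crossover at $|t-s|\sim m$ between the Brownian short-time regime and the ergodic long-time regime. Concretely this amounts to estimating the double It\^o integral $\int_s^t\big(\int_s^r e^{-(r-u)M/m}\,dW_u\big)\otimes dW_r$ and the quadratic term $\tfrac1m\int_s^t(p_r-p_s)\otimes Mp_r\,dr$; both are handled by the explicit O--U covariance kernel and its exponential decay, and the two-time-scale bookkeeping is where the real effort lies. The remaining ingredients — the first-level estimate, the elementary control of the $p_0$-transient, and the passage from moments to the rough path metric — are routine once these kernel estimates are secured.
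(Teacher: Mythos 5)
Your strategy---pass to the mass-one OU process by scaling, derive joint moment bounds of the form $C_k\, m^{\kappa k}|t-s|^{\theta k}$ (interpolating between the two regimes $|t-s|\ll m$ and $|t-s|\gg m$), and feed these into a Kolmogorov-type criterion for the rough path metric---is sound and essentially coincides with the paper's method, in particular with the remark on rates, which bypasses the ergodic theorem and proceeds by exactly such moment estimates followed by geometric interpolation between $\lesssim |t-s|^{1/2}$ and $\lesssim \sqrt m$.

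There is, however, a gap in the step you pass over quickest. You assert that the mean computation "gives $L$ the matrix in the statement." If one carries it out---and it is the heart of the proposition, so it must be carried out---one gets something else. Because the martingale part $\int_s^t(p_r-p_s)\otimes dW_r$ is centered,
\begin{equation*}
\E\bigl[\mathbb{P}_{s,t}\bigr] = -\frac{1}{m}\int_s^t \E\bigl[(p_r-p_s)\otimes Mp_r\bigr]\,dr,
\end{equation*}
and inserting your own ingredients $\E[p_r\otimes p_r]\approx mC$ and $\E[p_s\otimes p_r]=mCe^{-(r-s)M^{\ast}/m}$ yields $-(t-s)\,CM^{\ast}+O(m)$, \emph{not} $(t-s)\cdot\tfrac12(MC-CM^{\ast})$. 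Using $MC+CM^{\ast}=\Id$ one has $-CM^{\ast}=MC-\Id$, while the matrix in the statement equals $MC-\tfrac12\Id$; these differ by $-\tfrac12(t-s)\Id$. This is not a slip in the algebra: the symmetric part of the It\^o second-level process is forced by It\^o's formula to equal $\tfrac12 P_{s,t}\otimes P_{s,t}-\tfrac12(t-s)\Id$, which converges to $-\tfrac12(t-s)\Id$ because $P_{s,t}\to 0$, so the limit cannot be the antisymmetric matrix $\tfrac12(MC-CM^{\ast})$. Either the second level must be read in Stratonovich sense, or the It\^o limit is $(MC-\Id)(t-s)$. You should make this computation explicit and flag the discrepancy rather than assert the stated matrix; once the target is corrected, your variance bound via the OU covariance decay and Wiener chaos, together with the Kolmogorov argument, does give the claimed rough-path convergence.
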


In view of this proposition one suspects (correctly) that the convergence of
physical Brownian motion to Brownian motion is also non-trivial if one
thinks how Brownian particles act as controls, i.e. as rough paths. More
specifically, one expects a limit in which \textit{L\'{e}vy's stochastic
area }is perturbed by a term proportional to 
\begin{equation*}
(MC-CM^{\ast })/2,
\end{equation*}%
the (anti-symmetric) matrix which effectively described the pure area
(rough path) limit of the previous proposition. Let us insist, however, that
such a statement (i.e. the content of the following theorem) is \textit{not}
a corollary of the above since, in general,%
\begin{equation*}
\int \left( Z-Z^{n}\right) d\left( Z-Z^{n}\right) \rightarrow 0\nRightarrow
\int Z^{n}dZ^{n}\rightarrow \int ZdZ.
\end{equation*}
Indeed, if one thinks of $Z^{n},Z$ as rough paths, their (formal) iterated
integral are meaningful by definition of a rough path. In contrast, the
iterated integrals of $Z$ agains $Z^{n}$ will not even be, in general,
well-defined. We are now ready to state our main result.

\begin{theorem}
\label{thm:main}Let $M$ be a square matrix in dimension $n$ such that all
its eigenvalues have strictly positive real part. Let $W$ be a $n$%
-dimensional standard Brownian motion, $m$ ("mass") as strictly positive
scalar and consider the stochastic differential equations 
\begin{eqnarray}
dX &=&\frac{1}{m}Pdt\text{ \ \ (position)} \\
dP &=&-\frac{1}{m}MPdt+dW\text{ (momentum).}
\end{eqnarray}%
with (for simplicity only) zero initial position and momentum. Let $\mathbf{W%
}$ $=$ $(W,\mathbb{W}) $ be the natural rough path lift of $W$, where $%
\mathbb{W}_{s,t}=\int_{s}^{t}\left( W_{r}-W_{s}\right) \otimes \circ dW_{r}$%
. Define also $\widehat{\mathbf{W}}$ $=$ $(W,\widehat{\mathbb{W}})$, where 
\begin{equation}
\widehat{\mathbb{W}}_{s,t}=\mathbb{W}_{s,t}+(t-s)\frac{1}{2}(MC-CM^{\ast }),
\end{equation}%
and $C$ is as in the previous proposition. Then, as $m\rightarrow 0$, $MX$ \
converges to $\widehat{\mathbf{W}}$ in $L^{q}$ and $\alpha $-H\"{o}lder
rough path topology, for any $q$ $\geq $ $1$ and $\alpha $ $\in $ $(1/3,1/2)$%
. More precisely, we have 
\begin{equation}  \label{eqRPConv}
\sup_{s,t\in \lbrack 0,T]}\frac{\left\vert MX_{s,t}-W_{s,t}\right\vert }{%
|t-s|^{\alpha }}+\sup_{s,t\in \lbrack 0,T]}\frac{\left\vert
\int_{s}^{t}MX_{s,r}\otimes d(MX)_{r}-\widehat{\mathbb{W}}_{s,t}\right\vert 
}{|t-s|^{2\alpha }}\rightarrow 0\mbox{ in }L^{q}
\end{equation}%
as $m\rightarrow 0$ and this convergence is of rate arbitrarily close to $%
1/2-\alpha $.
\end{theorem}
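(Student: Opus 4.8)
The plan is to reduce everything to the previous proposition via the exact identity $MX_{t}=W_{t}-P_{t}$, valid because $d(MX)_{t}=M\,dX_{t}=\tfrac{1}{m}MP_{t}\,dt=dW_{t}-dP_{t}$ (the last step being the momentum equation) and all three processes vanish at $t=0$. The first term in \eqref{eqRPConv} then equals $\sup_{s,t}|P_{s,t}|/|t-s|^{\alpha}$ exactly, which tends to $0$ in $L^{q}$ by the proposition; for the rate $\tfrac12-\alpha$ I would combine the sharp bound $\E|P_{s,t}|^{2}\lesssim|t-s|\wedge m$ (the Ornstein--Uhlenbeck momentum equilibrates on the time scale $m$) with a Garsia--Rodemich--Rumsey / Kolmogorov argument to pass from pointwise $L^{q}$ estimates to the supremum.

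For the second level I would use that $MX$ is $C^{1}$ in time, so that $\int_{s}^{t}MX_{s,r}\otimes d(MX)_{r}$ is an ordinary Riemann--Stieltjes integral, which coincides with the Stratonovich one; substituting $MX=W-P$ and expanding bilinearly,
\begin{equation*}
\int_{s}^{t}MX_{s,r}\otimes d(MX)_{r}=\mathbb{W}_{s,t}-\int_{s}^{t}W_{s,r}\otimes \circ dP_{r}-\int_{s}^{t}P_{s,r}\otimes \circ dW_{r}+\int_{s}^{t}P_{s,r}\otimes \circ dP_{r}.
\end{equation*}
The last term is (a Stratonovich form of) the iterated integral $\mathbb{P}_{s,t}$ of the previous proposition, hence converges strongly, in $\alpha$-H\"older $L^{q}$, to $\tfrac12(MC-CM^{\ast})(t-s)$; here the Lyapunov identity $MC+CM^{\ast}=\Id$ (obtained by integrating $\tfrac{d}{ds}(e^{-Ms}e^{-M^{\ast}s})$ over $[0,\infty)$) is what reconciles the It\^o and Stratonovich normalisations. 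So it remains only to show that the two cross terms converge to $0$ in $\alpha$-H\"older $L^{q}$, at a rate at least $\tfrac12-\alpha$.

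The cross terms I would treat jointly via Stratonovich integration by parts, $\int_{s}^{t}W_{s,r}\otimes \circ dP_{r}+\big(\int_{s}^{t}P_{s,r}\otimes \circ dW_{r}\big)^{\ast}=W_{s,t}\otimes P_{s,t}$, after which the whole cross-term contribution is a combination of the boundary term $W_{s,t}\otimes P_{s,t}$ and of the single stochastic integral $\int_{s}^{t}P_{s,r}\otimes dW_{r}$ (and its transpose). The boundary term is negligible in the strong sense, since $|W_{s,t}|\lesssim|t-s|^{\alpha'}$ for every $\alpha'<\tfrac12$ while $P_{s,t}\to0$ by the proposition, so it remains to bound $\int_{s}^{t}P_{s,r}\otimes dW_{r}$. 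For this, Burkholder--Davis--Gundy and the It\^o isometry give $\E\big|\int_{s}^{t}P_{s,r}\otimes dW_{r}\big|^{q}\lesssim_{q}\big(\int_{s}^{t}\E|P_{s,r}|^{2}\,dr\big)^{q/2}$, and interpolating $\E|P_{s,r}|^{2}\lesssim(r-s)\wedge m\le m^{1-\theta}(r-s)^{\theta}$ with $\theta$ slightly above $4\alpha-1$ produces a bound of order $m^{q(\frac12-\alpha+\delta)}|t-s|^{(2\alpha+\varepsilon)q}$ for suitably small $\delta,\varepsilon>0$; a two-parameter GRR/sewing estimate then upgrades this to the supremum bound, with $m$-rate arbitrarily close to $\tfrac12-\alpha$. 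Collecting the terms, $\int_{s}^{t}MX_{s,r}\otimes d(MX)_{r}\to\mathbb{W}_{s,t}+\tfrac12(MC-CM^{\ast})(t-s)=\widehat{\mathbb{W}}_{s,t}$.

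The crux --- the one place where genuinely new work beyond the previous proposition is needed --- is the estimate on $\int P\otimes dW$. As the remark preceding the theorem emphasises, this joint iterated integral of the Brownian limit against the Ornstein--Uhlenbeck corrector is not part of the rough-path lift of $p$ alone, hence not furnished by the proposition, and controlling it forces one to use the fine structure of the corrector --- its $O(\sqrt m)$ amplitude \emph{and} its $O(m)$ decorrelation length --- rather than only $\|P\|_{\infty;[0,T]}\to0$. This is also where the intrinsic irregularity of Brownian motion enters: for a smoother or a rougher driver the integration-by-parts/isometry bookkeeping that makes the cross terms vanish would break down (cf.\ the remark on fractional Brownian motion).
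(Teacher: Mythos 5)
Your argument is correct and takes a genuinely different route from the paper's. The paper works from scratch: using $d(MX^{\varepsilon})=dW-\varepsilon\,dY^{\varepsilon}$ (with $Y^{\varepsilon}=P/\varepsilon$, $m=\varepsilon^{2}$) it integrates by parts to write
\begin{equation*}
\int_{0}^{t}MX^{\varepsilon}\otimes d(MX^{\varepsilon})=\int_{0}^{t}MX^{\varepsilon}\otimes dW-MX_{t}^{\varepsilon}\otimes(\varepsilon Y_{t}^{\varepsilon})+\int_{0}^{t}MY_{s}^{\varepsilon}\otimes Y_{s}^{\varepsilon}\,ds,
\end{equation*}
identifies the limit of the last term via the ergodic theorem (respectively a direct variance computation for the rate), observes that $MC-\tfrac{1}{2}I$ has vanishing symmetric part and hence equals $\tfrac{1}{2}(MC-CM^{\ast})$, and upgrades pointwise convergence to rough path convergence through the moment interpolation criteria (Prop.\ A.15, Thm.\ A.13) of \cite{FVforth} after establishing uniform-in-$\varepsilon$ second-moment H\"older bounds on $\tilde X$. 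You instead exploit $MX=W-P$, expand bilinearly in the Stratonovich sense, feed $\int P\otimes\circ dP$ into the preceding Proposition, and dispatch the cross terms by Stratonovich integration by parts, so that the whole theorem is reduced to a single new estimate on $\int P\otimes dW$. That reduction is clean, and your identification of $\int P\otimes dW$ as the irreducible new object --- an iterated integral across the pair $(W,P)$, not furnished by the rough path lift of $P$ alone, which is exactly why the theorem is not a corollary of the Proposition as the remark in the text warns --- is the right insight. Both the $O(\sqrt m)$ amplitude and the $O(m)$ decorrelation time of the momentum enter through $\int_{s}^{t}\mathbb{E}|P_{s,r}|^{2}\,dr\lesssim(t-s)\bigl((t-s)\wedge m\bigr)$, precisely as you say, and interpolation plus Kolmogorov/GRR give the claimed rate.

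Two caveats. First, the deduction that $\int_{s}^{t}P_{s,r}\otimes\circ dP_{r}\to\tfrac{1}{2}(MC-CM^{\ast})(t-s)$ deserves more care than the phrase ``Lyapunov reconciles the It\^o and Stratonovich normalisations.'' By It\^o's formula, $\operatorname{Sym}(\mathbb{P}_{s,t})=\tfrac{1}{2}P_{s,t}\otimes P_{s,t}-\tfrac{1}{2}(t-s)I\to-\tfrac{1}{2}(t-s)I$, so the literal It\^o iterated integral of $P$ tends to $\tfrac{1}{2}(MC-CM^{\ast})(t-s)-\tfrac{1}{2}(t-s)I$; the anti-symmetric expression displayed in the Proposition is the limit of the geometric (Stratonovich) integral $\int P_{s,r}\otimes\circ dP_{r}=\mathbb{P}_{s,t}+\tfrac{1}{2}(t-s)I$, which is indeed what appears in your bilinear expansion. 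Your conclusion is right, but the $\tfrac{1}{2}(t-s)I$ bookkeeping should be carried explicitly rather than absorbed into the word ``reconciles.'' Second, your argument uses the Proposition as a black box; to make it self-contained in the way the paper's Section 2 is, you would still need to supply the uniform-in-$m$ H\"older moment estimates on $\int_{s}^{t}P_{s,r}\otimes\circ dP_{r}$ that back up the Proposition's strong ($\alpha$-H\"older $L^{q}$) mode of convergence, i.e.\ the analogue of the paper's Gaussian fourth-moment bound on $\int_{s}^{t}\tilde Y_{s,u}\otimes\tilde Y_{u}\,du$.
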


\begin{remark}
In view of the tensorial transformation behaviour of iterated integrals, (%
\ref{eqRPConv}) is plainly equivalent to 
\begin{eqnarray*}
X_{s,t} &\rightarrow& M^{-1}W_{s,t}, \\
\int_s^t X_{s,u} \otimes dX_u &\rightarrow& M^{-1}\mathbb{W}_{s,t}
(M^{-1})^\ast + (t-s)\frac{1}{2}(C(M^{-1})^\ast-M^{-1}C);
\end{eqnarray*}
in $\alpha $-H\"{o}lder rough path metric.
\end{remark}

\begin{remark}
One has 
\begin{equation*}
\widehat{\mathbf{W}} = \mathbf{W}
\end{equation*}
if and only if $M$ is symmetric. To see this, note that $MC$ is symmetric
(hence equal to $CM^{\ast }$) if and only if $M$ is symmetric, using
symmetry and invertibility of $C$.
\end{remark}

\begin{remark}
The framework of Gaussian rough paths \cite{FVGauss} and \cite[Ch. 15]%
{FVforth}, which plays keyrole in non-Markovian H\"{o}rmander theory \cite%
{CF, CHLT} and some recent breakthroughs in non-linear SPDE theory \cite%
{H,H2} is only applicable if $M$ is symmetric since then one can diagonalize
the dynamics such as to have Gaussian driving signals with independent
components. In this case, upon checking some uniform variation bounds on the
covariance, it could be used to see the convergence (\ref{eqRPConv}) to $%
\mathbf{W}$, the "natural" rough path lift of Brownian motion; \cite[Ch. 15]%
{FVforth}. But since $\widehat{\mathbf{W}}$ is not natural when $Anti\left(
M\right) \neq 0$, we here also provide an explicit example which illustrates
the necessity of the assumptions put forward in \cite{FVGauss, FVforth}.
\end{remark}

\begin{remark}
If $M$ is normal, i.e. 
\begin{equation*}
[M,M^{\ast }] = 0,
\end{equation*}
then the difference between $\widehat{\mathbb{W}}$ and $\mathbb{W}$ has a
somewhat simpler expression. Indeed , we compute 
\begin{equation*}
C=\frac{1}{2}\func{Sym}(M)^{-1},
\end{equation*}
and since $C$ commutes with $M$, we get 
\begin{equation}
\frac{1}{2}(MC-CM^{\ast })=\frac{1}{2}\func{Anti}(M)\func{Sym}(M)^{-1}.
\label{RmkAreaCorrSimple}
\end{equation}
Similarly, the area correction for $X$ is seen to simpify to 
\begin{equation}
\frac{1}{2}(C(M^{-1})^\ast-M^{-1}C)= -\frac{1}{2}\func{Anti}(M^{-1}) \func{%
Sym}(M)^{-1} .  \label{RmkAreaCorrSimpleX}
\end{equation}
\end{remark}

\begin{remark}
It would be possible to "unit" the above proposition and theorem in saying
that the physcial Brownian motion is a "good" approximation in the sense of 
\cite{CFV} to the limiting rough path $\widehat{\mathbf{W}}$.
\end{remark}

\begin{remark}
Incidentally, the rate "anything less than $1/2-\alpha$" is also the rate of
convergence (in $\alpha $-H\"{o}lder rough path metric) for piecewise linear
"Wong--Zakai" approximations to Brownian motion obtained in \cite{HN} and
optimally of this rate is known. In both cases, these rates are obtained as
a rather mechanical consequence of good moment estimates, cf. \cite[Thm A.13]%
{FVforth}, so that we also believe our rates to be optimal.
\end{remark}

The following example is taken from Pavliotis--Stuart, Hairer \cite[Section
11.7.7]{PS}. We note that the PDE argument (based on multiscale techniques)
offered in \cite[Section 11.7.7]{PS} can only give convergence of the
finite-dimensional distributions, tightness - especially in rough path
topology - will require additional and non-trivial estimates (which are
implied for our work below). And of course, strong convergence, available to
us because of a natural coupling of physical Brownian motion $X$ and $W=\dot{%
\xi}$, is out of reach with PDE methods.

\begin{example}
Take $n=2, \alpha \in \mathbf{R}$ and 
\begin{equation*}
M=Id-\alpha \left( 
\begin{array}{cc}
0 & -1 \\ 
1 & 0%
\end{array}%
\right)
\end{equation*}%
so that 
\begin{equation*}
M^{-1}=\frac{1}{ 1+\alpha ^{2}} \left( 
\begin{array}{cc}
1 & - \alpha \\ 
\alpha & 1%
\end{array}%
\right).
\end{equation*}%
Note $\func{Sym} (M) = Id$. Then, from (\ref{RmkAreaCorrSimpleX}), the
area correction of $X$, in the $m \rightarrow 0$ limit, equals%
\begin{equation}
\frac{-1}{2 (1+\alpha^2)} \left( 
\begin{array}{cc}
0 & - \alpha \\ 
\alpha & 0%
\end{array}%
\right) =\frac{\alpha}{2(1+\alpha^2)}\left( e_{1}\otimes e_{2}-e_{2}\otimes
e_{1}\right)  \label{ExPS}
\end{equation}%
where $e_{1},e_{2}$ denotes the standard basis of $\mathbf{R}^{2}$. This
agrees precisely with \cite[(11.7.28c)]{PS}.
\end{example}

Let us conclude this introduction by noting that the example of physical
Brownian motion under influence of a magnetic field as a rough path has some
history. Indeed, it appeared as motivation (but without much details) in
several presentations, including some by the last-named author in early 2000.


\section{Proof of theorem \protect\ref{thm:main}}

We first give a proof of the rough path convergence without rates, based on
the ergodic theorem. The adaptions which lead to the announced rates (and
also bypass the use of the ergodic theorem!) are then explained in details
in the remark following the proof.

\begin{proof}
In order to exploit Brownian scaling, it is convenient to set 
\begin{equation*}
m=\varepsilon ^{2}
\end{equation*}
and then $Y^{\varepsilon }$ as 
rescaled momentum,%
\begin{equation*}
Y^{\varepsilon }=P/\varepsilon \text{.}
\end{equation*}%
We shall also write $X^{\varepsilon }=X$, to emphasize dependence on $%
\varepsilon $. We then have 
\begin{eqnarray}
dY^{\varepsilon } &=&-\varepsilon ^{-2}MY^{\varepsilon }dt+\varepsilon
^{-1}dW  \label{eqY2} \\
dX^{\varepsilon } &=&\varepsilon ^{-1}Y^{\varepsilon }dt,  \label{eqZ2}
\end{eqnarray}

By assumption, there exists $\lambda >0$ s.t. the real part of every
eigenvalue of $M$ is (strictly) bigger than $\lambda $. For later reference,
we note that this implies the estimate 
\begin{equation*}
|\exp (-\tau M)|=O(\exp (-\lambda \tau ))
\end{equation*}
as $\tau \rightarrow \infty $. For fixed $\varepsilon $, define the Brownian
motion 
\begin{equation*}
\tilde{W}_{\cdot }=\varepsilon W_{\varepsilon ^{-2}\cdot },
\end{equation*}%
and consider the SDEs 
\begin{eqnarray*}
d\tilde{Y} &=&-M\tilde{Y}dt+d\tilde{W} \\
d\tilde{X} &=& \tilde{Y}dt;
\end{eqnarray*}%
note that the law of the solutions does not depend on $\varepsilon $.
Furthermore, when solved with identical initial data as \eqref{eqY2}-%
\eqref{eqZ2}, we have pathwise equality 
\begin{equation}
\left( Y_{\cdot }^{\varepsilon },X_{\cdot }^{\varepsilon }/\varepsilon
\right) =\tilde{Y}_{\varepsilon ^{-2}\cdot },\tilde{X}_{\varepsilon
^{-2}\cdot }
\end{equation}

Thanks to our assumption on $M$, $\tilde{Y}$ is ergodic; the stationary
solution has (zero mean, Gaussian) law 
\begin{equation*}
\nu \sim \mathcal{N}(0,C)
\end{equation*}
for some covariance matrix $C$. To compute it, write down the stationary
solution%
\begin{equation*}
\tilde{Y}_{t}^{\text{stat}}=\int_{-\infty }^{t}e^{-M\left( t-s\right)
}dW_{s};
\end{equation*}%
for each $t$ (and in particular $t=0$) the law of $\tilde{Y}_{t}^{\text{stat}%
}$ is precisely $\nu $. We then see that 
\begin{equation*}
C=\mathbb{E}\left( \tilde{Y}_{0}^{\text{stat}}\otimes \tilde{Y}_{0}^{\text{%
stat}}\right) =\int_{-\infty }^{0}e^{-M\left( -s\right) }e^{-M^{\ast }\left(
-s\right) }ds=\int_{0}^{\infty }e^{-Ms}e^{-M^{\ast }s}ds.
\end{equation*}%
Since 
\begin{equation*}
\sup_{0\leq t<\infty }{\mathbb{E}}|\tilde{Y}_{t}^{2}|<\infty,
\end{equation*}
it is clear that 
\begin{equation*}
\varepsilon \tilde{Y}_{\varepsilon ^{-2}t} = \varepsilon Y_{t}^{\varepsilon
} \rightarrow 0
\end{equation*}
in $L^{2}$ uniformly in $t$ (and hence in $L^{q}$ for any $q<\infty $).
Noting that 
\begin{equation*}
MX_{t}^{\varepsilon }=W_{t}-\varepsilon Y_{0,t}^{\varepsilon },
\end{equation*}
the first part of the proposition is now obvious. Moreover, by the ergodic
theorem\footnote{%
See e.g. \cite{Stroock} (p.421) or \cite{Kal} (p.409)}, 
\begin{equation}
\int_{0}^{t}f(Y_{t}^{\varepsilon })dt\rightarrow t\int f(y)\nu
(dy),\;\;\;\;\;\mbox{ in }L^{q},\mbox{ any }q<\infty ,  \label{eqErgodic}
\end{equation}%
for all reasonable test function $f$. We shall only use it for quadratics%
\footnote{%
The ergodic theorem in the references we have cited only applies to bounded $%
f$, but it is easy to see by a truncation argument that \eqref{eqErgodic}
still holds as long as $(f(Y_{s}))_{s\geq 0}$ is bounded in any $L^{q}$ and $%
\nu $ has finite moments of all order.}.\ \ Using 
\begin{equation*}
dX^{\varepsilon }=\varepsilon ^{-1}Y^{\varepsilon }dt
\end{equation*}
we can then write 
\begin{eqnarray*}
\int_{0}^{t}MX_{s}^{\varepsilon }\otimes d(MX^{\varepsilon })_{s}
&=&\int_{0}^{t}MX_{s}^{\varepsilon }\otimes dW_{s}-\varepsilon
\int_{0}^{t}MX_{s}^{\varepsilon }\otimes dY_{s}^{\varepsilon } \\
&=&\int_{0}^{t}MX_{s}^{\varepsilon }\otimes dW_{s}-MX_{t}^{\varepsilon
}\otimes (\varepsilon Y_{t}^{\varepsilon })+\varepsilon
\int_{0}^{t}d(MX^{\varepsilon })_{s}\otimes Y_{s}^{\varepsilon } \\
&=&\int_{0}^{t}MX_{s}^{\varepsilon }\otimes dW_{s}-MX_{t}^{\varepsilon
}\otimes (\varepsilon Y_{t}^{\varepsilon })+\int_{0}^{t}MY_{s}^{\varepsilon
}\otimes Y_{s}^{\varepsilon }ds \\
&\rightarrow &\int_{0}^{t}W_{s}\otimes dW_{s}-0+t\int My\otimes y\nu (dy) \\
&=&\int_{0}^{t}W_{s}\otimes dW_{s}+tMC \\
&=&\mathbb{W}_{0,t}+t\left( MC-\frac{1}{2}I\right)
\end{eqnarray*}%
where the convergence is in $L^{q}$ for any $q\geq 2$. By considering the
symmetric part of the above equation, 
\begin{equation*}
\frac{1}{2}(MX_{t}^{\varepsilon })\otimes (MX_{t}^{\varepsilon })\rightarrow 
\frac{1}{2}W_{t}\otimes W_{t}+\func{Sym}\left( MC-\frac{1}{2}I\right) ,
\end{equation*}%
we see that 
\begin{equation*}
MC-\frac{1}{2}I
\end{equation*}
has symmetric part $0$, i.e. is antisymmetric, and hence also equals 
\begin{equation*}
\frac{1}{2}\left( MC-CM^{\ast }\right).
\end{equation*}
This settles pointwise convergence, in the sense that 
\begin{equation*}
\left( MX_{t}^{\varepsilon },\int_{0}^{t}MX_{s}^{\varepsilon }\otimes
d(MX^{\varepsilon })_{s}\right) \rightarrow \left( W_{t},\widehat{\mathbb{W}}%
_{0,t}\right) .
\end{equation*}%
In view of \cite{FVforth}, Proposition A.15, the rough path convergence %
\eqref{eqRPConv} will follow once we have checked that for any $q<\infty $, 
\begin{eqnarray*}
\sup_{\varepsilon \in (0,1]}E\left[ \left\vert X_{s,t}^{\varepsilon
}\right\vert ^{q}\right] &\leq &C_{q}\left\vert t-s\right\vert ^{\frac{q}{2}}
\\
\sup_{\varepsilon \in (0,1]}E\left[ \left\vert \int_{s}^{t}X_{s,\cdot
}^{\varepsilon }\otimes dX^{\varepsilon }\right\vert ^{q}\right] &\leq
&C_{q}\left\vert t-s\right\vert ^{q}.
\end{eqnarray*}%
First, since $X$ is Gaussian, it follows from integrability properties of
the first two Wiener-Ito chaos, that it is enough to show it for $q=2$.
Secondly, we note that the desired estimates are a consequence of the
following : 
\begin{eqnarray}
{\mathbb{E}}\left[ \left\vert \tilde{X}_{s,t}\right\vert ^{2}\right] &\leq &(%
\mbox{const.})|t-s|,  \label{eqHol1} \\
E\left[ \left\vert \int_{s}^{t}\tilde{X}_{s,\cdot }\otimes d\tilde{X}%
\right\vert ^{2}\right] &\leq &(\mbox{const.})\left\vert t-s\right\vert ^{2},
\label{eqHol2}
\end{eqnarray}%
where the constants must be uniform over $t,s$ $\in $ $(0,\infty )$. Indeed,
this follows directly from writing 
\begin{equation*}
{\mathbb{E}}\left[ \left\vert X_{s,t}^{\varepsilon }\right\vert ^{2}\right] =%
{\mathbb{E}}\left[ \left\vert \varepsilon \tilde{X}_{\varepsilon
^{-2}s,\varepsilon ^{-2}t}\right\vert ^{2}\right] \leq (\mbox{const.}%
)\varepsilon ^{2}\left\vert \varepsilon ^{-2}t-\varepsilon ^{-2}s\right\vert
=(\mbox{const.})|t-s|,
\end{equation*}%
(note uniformity in $\varepsilon $), and similarly for the second moment of
the iterated integral.

In order to check \eqref{eqHol1}, it is enough to note $M\tilde{X}_{s,t}=%
\tilde{W}_{s,t}-\tilde{Y}_{s,t}$, combined with the estimate 
\begin{eqnarray*}
{\mathbb{E}}\left[ |\tilde{Y}_{s,t}|^{2}\right] &=&{\mathbb{E}}\left[
\left\vert (e^{-M(t-s)}-I)\tilde{Y}_{s}\right\vert ^{2}\right]
+\int_{s}^{t}Tr(e^{-Mu}e^{-M^{\ast }u})du \\
&\leq &C(M)\left\vert t-s\right\vert ,
\end{eqnarray*}%
using the fact that $Re(\sigma (M))\subset (0,+\infty )$. For the second
one, we write 
\begin{eqnarray*}
E\left[ \left\vert \int_{s}^{t}(\tilde{X}_{s,u})^{i}d(\tilde{X}%
_{u})^{j}\right\vert ^{2}\right] &=&{\mathbb{E}}\left[ \left\vert
\int_{s}^{t}\int_{s}^{u}\tilde{Y}_{r}^{i}\tilde{Y}_{u}^{j}dr\,du\right\vert
^{2}\right] \\
&=&\int_{[s,t]^{4}}{\mathbb{E}}\left[ \tilde{Y}_{r}^{i}\tilde{Y}_{u}^{j}%
\tilde{Y}_{q}^{i}\tilde{Y}_{v}^{j}\right] \mathbf{1}_{\{r\leq u;q\leq
v\}}dr\,du\,dq\,dv \\
&\leq &\int_{[s,t]^{4}}\left( \left\vert {\mathbb{E}}\left[ \tilde{Y}_{r}^{i}%
\tilde{Y}_{u}^{j}\right] \right\vert \left\vert {\mathbb{E}}\left[ \tilde{Y}%
_{q}^{i}\tilde{Y}_{v}^{j}\right] \right\vert +\left\vert {\mathbb{E}}\left[ 
\tilde{Y}_{r}^{i}\tilde{Y}_{q}^{i}\right] \right\vert \left\vert {\mathbb{E}}%
\left[ \tilde{Y}_{u}^{j}\tilde{Y}_{v}^{j}\right] \right\vert \right. \\
&&\left. \;\;\;\;\;\;\;\;\;\;\;\;+\left\vert {\mathbb{E}}\left[ \tilde{Y}%
_{r}^{i}\tilde{Y}_{v}^{j}\right] \right\vert \left\vert {\mathbb{E}}\left[ 
\tilde{Y}_{u}^{j}\tilde{Y}_{q}^{i}\right] \right\vert \right) dr\,du\,dq\,dv
\\
&\leq &C_{0}\left( \int_{[s,t]^{2}}\left\vert {\mathbb{E}}\left[ \tilde{Y}%
_{r}\otimes \tilde{Y}_{u}\right] \right\vert dr\,du\right) ^{2} \\
&\leq &C_{1}\left( \int_{[s,t]^{2}}\left\vert {\mathbb{E}}\left[ \tilde{Y}%
_{r}\otimes \tilde{Y}_{u}\right] \right\vert \mathbf{1}_{\{r\leq
u\}}dr\,du\right) ^{2},
\end{eqnarray*}%
where we have used the fact that $\tilde{Y}$ is Gaussian ("Wick's formula").
But for $r\leq u$, ${\mathbb{E}}\left[ \tilde{Y}_{u}\big\vert\tilde{Y}_{r}%
\right] =e^{-M(u-r)}\tilde{Y}_{r}$, so that

\begin{eqnarray*}
&&\left( \int_{[s,t]^{2}}\left\vert {\mathbb{E}}\left[ \tilde{Y}_{r}\otimes 
\tilde{Y}_{u}\right] \right\vert \mathbf{1}_{\{r\leq u\}}dr\,du\right) ^{2}
\\
&=&\left( \int_{[s,t]^{2}}\left\vert {\mathbb{E}}\left[ \tilde{Y}_{r}\otimes
e^{-M(u-r)}\tilde{Y}_{r}\right] \right\vert \mathbf{1}_{\{r\leq
u\}}dr\,du\right) ^{2} \\
&\leq &C_{2}\left( \int_{s}^{t}\left( \int_{r}^{t}e^{-\lambda
(u-r)}du\right) {\mathbb{E}}\left[ \left\vert \tilde{Y}_{r}\right\vert ^{2}%
\right] dr\right) ^{2}\leq C_{3}(t-s)^{2},
\end{eqnarray*}%
recalling that $|\exp (-\tau M)|=O(\exp (-\lambda \tau ))$, and %
\eqref{eqHol2} is proved.
\end{proof}

\begin{remark}
\textbf{(Rates) }The use of the ergodic theorem can be avoided by checking
"directly" that%
\begin{equation*}
\int_{0}^{t}Y_{s}^{\varepsilon }\otimes Y_{s}^{\varepsilon }ds\rightarrow
tC=t\mathbb{E}\left( \tilde{Y}_{0}^{\text{stat}}\otimes \tilde{Y}_{0}^{\text{%
stat}}\right)
\end{equation*}%
in $L^{q}$ $\forall q<\infty $, with a proof similar to the proof of the
inequality \eqref{eqHol2}. Assume for simplicity that $Y_{s}^{\varepsilon }=%
\tilde{Y}_{s/\varepsilon ^{2}}$ is started with $Y_{0}^{\varepsilon }\sim
\nu $; note then that ${\mathbb{E}}\int_{0}^{t}Y_{s}^{\varepsilon }\otimes
Y_{s}^{\varepsilon }=tC$. Furthermore : 
\begin{eqnarray*}
{\mathbb{E}}\left[ \left( \int_{0}^{t}Y_{s}^{\varepsilon
,i}Y_{s}^{\varepsilon ,j} ds\right) ^{2}\right] &=&\int_{[0,t]^{2}}{\mathbb{E%
}}\left[ Y_{s}^{\varepsilon ,i}Y_{s}^{\varepsilon ,j}Y_{r}^{\varepsilon
,i}Y_{r}^{\varepsilon ,j}\right] \,ds\,dr \\
&=&\int_{[0,t]^{2}}{\mathbb{E}}\left[ Y_{s}^{\varepsilon
,i}Y_{s}^{\varepsilon ,j}\right] {\mathbb{E}}\left[ Y_{r}^{\varepsilon
,i}Y_{r}^{\varepsilon ,j}\right] \,ds\,dr \\
&&\;\;\;\;\;\;+2\int_{[0,t]^{2}}{\mathbb{E}}\left[ Y_{r}^{\varepsilon
,i}Y_{s}^{\varepsilon ,j}\right] {\mathbb{E}}\left[ Y_{s}^{\varepsilon
,i}Y_{r}^{\varepsilon ,j}\right] \,ds\,dr \\
&\leq &C_{ij}^{2}t^{2}+\int_{[0,t]^{2}}\left( {\mathbb{E}}\left[
Y_{r}^{\varepsilon ,i}Y_{s}^{\varepsilon ,j}\right] ^{2}+{\mathbb{E}}\left[
Y_{s}^{\varepsilon ,i}Y_{r}^{\varepsilon ,j}\right] ^{2}\right) \,ds\,dr. \\
&\leq &C_{ij}^{2}t^{2}+4\int_{[0,t]^{2}}{\mathbb{E}}\left[
Y_{r}^{\varepsilon }\otimes Y_{s}^{\varepsilon }\right] ^{2}\mathbf{1}%
_{\{s\leq r\}}\,ds\,dr.
\end{eqnarray*}%
Now note that for $s\leq r$, 
\begin{equation*}
{\mathbb{E}}\left[ Y_{r}^{\varepsilon }\big\vert Y_{s}^{\varepsilon }\right]
=e^{-\varepsilon ^{-2}M(r-s)}Y_{s}^{\varepsilon },
\end{equation*}
so that 
\begin{equation*}
{\mathbb{E}}\left[ Y_{r}^{\varepsilon }\otimes Y_{s}^{\varepsilon }\right]
^{2} = O(e^{-\frac{2\lambda }{\varepsilon ^{2}}(r-s)}),
\end{equation*}
and we finally obtain 
\begin{eqnarray*}
{\mathbb{E}}\left[ \left( \int_{0}^{t}Y_{s}^{\varepsilon
,i}Y_{s}^{\varepsilon ,j}ds-tC_{ij}\right) ^{2}\right] &=&{\mathbb{E}}\left[
\left( \int_{0}^{t}Y_{s}^{\varepsilon ,i}Y_{s}^{\varepsilon ,j}ds\right) ^{2}%
\right] -\left( tC_{ij}\right) ^{2} \\
&\leq &\mbox{(const.)}\int_{0}^{t}\left( \int_{s}^{\infty }e^{-\frac{%
2\lambda }{\varepsilon ^{2}}(r-s)}dr\right) ds\;=\;\mbox{(const.)}%
\varepsilon ^{2}.
\end{eqnarray*}

We have thus proved that the $L^{2}$-convergence (and by Gaussian properties 
$L^{q}$-convergence for any $q<\infty $) $\int_{0}^{t}Y_{s}^{\varepsilon
}\otimes Y_{s}^{\varepsilon }ds\rightarrow tC$ is in fact of order $%
\varepsilon $. Actually, from here on it is not difficult to establish
convergence rates of (\ref{eqRPConv}). From \cite[Thm A.13]{FVforth} and the
work already done in the previous proof (reduction from $q^{th}$ moments to
second moments is immediate by Gaussian chaos) it will suffice to check%
\begin{eqnarray}
{\mathbb{E}}\left[ \left\vert W_{s,t}-MX_{s,t}^{\varepsilon }\right\vert ^{2}%
\right] &\leq &(\mbox{const.})\delta \left( \varepsilon \right) |t-s|^{\beta
}, \\
{\mathbb{E}}\left[ \left\vert \widehat{\mathbb{W}}_{s,t}-%
\int_{s}^{t}MX_{s,r}^{\varepsilon }\otimes d(MX^{\varepsilon
})_{r}\right\vert ^{2}\right] &\leq &(\mbox{const.})\delta \left(
\varepsilon \right) \left\vert t-s\right\vert ^{2\beta },
\end{eqnarray}%
for fixed $1/3<\beta \leq 1/2$ and $\delta =\delta \left( \varepsilon ;\beta
\right) $. As long as $\delta \left( \varepsilon \right) \rightarrow 0$ with 
$\varepsilon $, this is also the convergence rate in (\ref{eqRPConv}), for
any $1/3<$ $\alpha <\beta $. A short computation shows that 
\begin{eqnarray*}
{\mathbb{E}}\left[ \left\vert W_{s,t}-MX_{s,t}^{\varepsilon }\right\vert ^{2}%
\right] ^{\frac{1}{2}} &\leq &C\left\vert t-s\right\vert ^{1/2}\text{ and }%
\leq C\varepsilon , \\
{\mathbb{E}}\left[ \left\vert \widehat{\mathbb{W}}_{s,t}-%
\int_{s}^{t}MX_{s,r}^{\varepsilon }\otimes d(MX^{\varepsilon
})_{r}\right\vert ^{2}\right] ^{\frac{1}{2}} &\leq &C\left\vert
t-s\right\vert \text{ and }\leq C\varepsilon .
\end{eqnarray*}%
By (geometric) interpolation, with exponent $2\beta \leq 1$ and $1-2\beta ,$%
\begin{eqnarray*}
{\mathbb{E}}\left[ \left\vert W_{s,t}-MX_{s,t}^{\varepsilon }\right\vert ^{2}%
\right] ^{\frac{1}{2}} &\leq &C\varepsilon ^{1-2\beta }\left\vert
t-s\right\vert ^{\beta }, \\
{\mathbb{E}}\left[ \left\vert \widehat{\mathbb{W}}_{s,t}-%
\int_{s}^{t}MX_{s,r}^{\varepsilon }\otimes d(MX^{\varepsilon
})_{r}\right\vert ^{2}\right] ^{\frac{1}{2}} &\leq &C\varepsilon ^{1-2\beta
}\left\vert t-s\right\vert ^{2\beta },
\end{eqnarray*}%
and we obtain $L^{2}$ (and then $L^{q}$, any $q<\infty $) convergence rate $%
\delta \left( \varepsilon \right) =\varepsilon ^{1-2\beta }$. In other
words, given $\alpha \in \left( 1/3,1/2\right) $ we have rate arbitrarily
close to $1-2\alpha $; by a Borel--Cantelli argument this is also the
almost-sure rate, say, along $\varepsilon =1/n$. Note that by working in the
stronger topology ($\alpha \uparrow 1/2$), one loses on the rate of
convergence. Also, since "level-2" rough path theory imposes $\alpha >1/3$,
an upper bound for the best possible rate (in a rough path metric!) is given
by $1-2/3=1/3$. (The situation is very similar to the rate of convergence of
piecewise linear approximations to Brownian motion\ (as rough path), see 
\cite{HN08}.
\end{remark}

\section{Applications and remarks}

\bigskip We conclude this note with a number of applications and remarks.

\begin{remark}
Write the (anti-symmetric) matrix $MC-CM^{\ast }$ as $\sum_{i<j}\gamma
_{i,j}[e_{i},e_{j}]$. As a consequence of our main theorem, we have the
following convergence result for ODEs driven by "physical" Brownian motion,
in the zero mass limit.Let $V_{0}$ $\in $ $\func{Lip}^{1}$, $V=(V_{1},\ldots
,V_{n})$ $\in $ $\func{Lip}^{2+\delta }$ for some $\delta >0$, be vector
fields on ${\mathbb{R}}^{e}$. Let $Y^{\varepsilon }$ be the solution to the
SDE (actually, random ODE) 
\begin{equation*}
dY^{\varepsilon }=V_{0}(Y^{\varepsilon
})dt+\sum_{i=1}^{d}V_{i}(Y^{\varepsilon })dX^{\varepsilon
;i},\,\,Y_{0}^{\varepsilon }=x\in {\mathbb{R}}^{e}.
\end{equation*}%
Then as $\varepsilon \rightarrow 0$, $Y^{\varepsilon }$ converges to the
solution $Y$ of the following It\^{o} SDE 
\begin{equation*}
dY=\tilde{V}_{0}\left( Y\right) dt+\sum_{i=1}^{n}\tilde{V}%
_{i}(S^{0})dB_{s}^{i},
\end{equation*}%
where $(\tilde{V}_{1},\ldots ,\tilde{V}_{n})=(V_{1},\ldots ,V_{n})M^{-1}$
and 
\begin{equation*}
\tilde{V}_{0}=V_{0}+\frac{1}{2}\sum_{i=1}^{d}\tilde{V}_{i}\tilde{V}_{i}+%
\frac{1}{2}\sum_{1\leq i<j\leq n}\gamma _{i,j}[\tilde{V}_{i},\tilde{V}_{j}].
\end{equation*}%
Indeed, given our main result, this is a simple consequence of rough path
stability and the impact of higher order perturbations on RDEs, see e.g. 
\cite[Ch. 12]{FVforth}, combined with the usual It\^{o}--Stratonovich
correction.
\end{remark}

\begin{remark}[Magnetohydrodynamics]
A (physical) system - as described by the above differential equations -
which is driven/controlled by a single, charged Brownian particle in a
magnetic field may not appear to be a standard situation in applied science.
However, it is not hard to think of a system being influenced by a cloud of
such particles. If $N$ such particles move independently, our main theorem
applies immediately in dimension $n=3N$. Of course, for $N>>1$ one needs to
incorporate interactions between the particles. In fact, the movement of a
cloud of charged particles will effect very much the magnetic field itself.
In our model, the matrix $M$ should then be allowed to depend on the (bulk)
behaviour of the $N$ particles. Much more work will be necessary to give a
proper rough path analysis of this situation, we do remark however that
first mean field results for rough differential equations have been obtained
by Cass--Lyons \cite{CL}. It is quite conceivable that our rough path
perspective then provides a very novel point of view for
magnetohydrodynamics.
\end{remark}

\begin{remark}
Similar correction terms appear when one considers limits of one-forms
integrated against $X^{\varepsilon }$. The rough path correction matters,
for instance, if one wants to make a change of measure and represent the
stochastic integral in the Girsanov factor in terms of $X^{\varepsilon }$,
rather than in terms of the underlying Brownians. (This type of
representation plays an important role in "robust" filtering, path sampling
of conditioned diffusions \cite{HSV, H} and related topics.)
\end{remark}

\begin{remark}
The \emph{signature} of a path is the sequence of its iterated integrals
against itself. For (deterministic) paths of bounded variation, it fully
characterizes the path up to "tree-like" equivalence \cite{HL}. In a similar
spirit, the \emph{expected} signature of a processes (in the sense below)
characterizes the essence of its law, at least when it comes the solution of
stochastic differential equations; the so-called cubature method is based on
this \cite{LVcub}. In \cite{NH} expected signatures of many stochastic
processes are considered. By either specializing these considerations or a
direct computation one can see that, as $\varepsilon \rightarrow 0$, the
expected signature of $MX^{\varepsilon }$ converges to 
\begin{equation*}
{\mathbb{E}}\left[ S(\widehat{\mathbf{W}})_{0,T}\right] =\exp \left( \frac{T%
}{2}\left( \sum_{i}e_{i}\otimes e_{i}+\sum_{i<j}\gamma
_{i,j}[e_{i},e_{j}]\right) \right) .
\end{equation*}
\end{remark}

\begin{remark}
In this paper we have only considered the case where $M$ was constant. It is
however natural to consider the case where the dynamics (i.e. friction,
magnetic field and covariance matrix of the brownian term) depend on the
position $X$ of the particle. This leads to consider the coupled system of
SDEs 
\begin{eqnarray*}
dY^\varepsilon &=& - \varepsilon^{-2} M(X^\varepsilon) Y^\varepsilon dt +
\varepsilon^{-1} dW_t, \\
dX^\varepsilon &=& \varepsilon^{-1} Q(X^\varepsilon) Y^\varepsilon dt.
\end{eqnarray*}

It is then not difficult to show that the paths $X^\varepsilon$ $\to$ $X^0$
pointwise, where $X^0$ is solution to 
\begin{eqnarray*}
dX^0 &=& \left[ D_x (QM^{-1}) : C \right](X^0) dt + (QM^{-1})(X^0) dW,
\end{eqnarray*}
where $\left[ D_x (QM^{-1}) : C \right]^i$ $:=$ $\sum_{k,l} \partial_k
(QM^{-1})^i_l C^{kl}$ and $C=C(x)$ is still defined by the same formula. 

As for the iterated integral, we obtain a similar correction except that it
is now state dependent (and thus random): 
\begin{equation*}
\int_{0}^{t}X^{\varepsilon }\otimes dX^{\varepsilon }\rightarrow
\int_{0}^{t}X^{0}\otimes \circ dX^{0}+\frac{1}{2}\int_{0}^{t}Q\left(
C(M^{-1})^{\ast }-M^{-1}C\right) Q^{\ast }(X_{s}^{0})ds.
\end{equation*}%
The computation of these pointwise convergences is close to the beginning of
the proof of Theorem 1, but instead of using the ergodic theorem one should
notice that for small $\varepsilon $ 
\begin{equation*}
\int_{0}^{t}f(X_{s}^{\varepsilon },Y_{s}^{\varepsilon })ds\approx
\int_{0}^{t}(\int f(X_{s}^{\varepsilon },y)\nu _{C}(X_{s}^{\varepsilon
},dy))ds,
\end{equation*}%
as the process $Y^{\varepsilon }$ evolves at a much faster time-scale than $%
X^{\varepsilon }$ (here $\approx $ should mean that the difference is small
in $L^{2}$-norm). 
%
The detailed verification of convergence in rough path sense is technical
(mainly, because one looses the Gaussian and Markovian structure of $%
Y^{\varepsilon }$) and left to subsequent work.
\end{remark}

\begin{remark}[On the role of Brownian roughness] \label{rmkFBM}
Let us return to (\ref{newton}), but now with Brownian motion replaced by a
(deterministic) path $\gamma $ defined on $\left[ 0,T\right] $. That is, we
consider the evolution%
\begin{equation*}
m\ddot{x}=-M\dot{x}+\dot{\gamma}.
\end{equation*}
Of course, even when $\gamma $ fails to be differentiable this equation is
well-defined in the distributional sense, thanks to the additive structure
of the noise. As before we assume that $M$ has an antisymmetric part which
therefore triggers rotation and thus effects the area. One may wonder how
"rough" the driving force (now assumed deterministic) needs to be to see
some non-trivial area correction in the limit. As we now show, the roughness
of Brownian motion -with (almost) $1/2$-H\"{o}lder sample paths - is crucial
and \textit{no area corrections} arises when $\gamma $ is H\"{o}lder with
exponent greater than $1/2$.
\end{remark}

\begin{proposition}
Assume $\gamma $ is $\alpha $-H\"{o}lder. Then, as $m$ $\rightarrow $ $0$, $%
Mx_{0,\cdot }$ converges to $\gamma _{0,\cdot }$ in $\beta $-H\"{o}lder
topology for any $\beta $ $<$ $\alpha $. In particular, when $\alpha >1/2$
it follows that%
\begin{equation*}
\int_{0}^{\cdot }Mx_{0,t}\otimes d\left( Mx_{t}\right) \rightarrow
\int_{0}^{\cdot }\gamma _{0,\cdot }\otimes d\gamma \text{ as }m\rightarrow 0
\end{equation*}%
where the integral on the left-hand-side is understood in Young sense,
convergence may be understood uniformly on $\left[ 0,T\right] $ (and
actually in $\beta $-H\"{o}lder rough path sense).
\end{proposition}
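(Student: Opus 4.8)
The plan is to solve the momentum equation explicitly and track the error terms carefully. Writing $m = \varepsilon^2$ and $Y^\varepsilon = P/\varepsilon$ as in the proof of Theorem~\ref{thm:main}, the momentum ODE is $\dot Y^\varepsilon = -\varepsilon^{-2} M Y^\varepsilon + \varepsilon^{-1}\dot\gamma$, which (with zero initial momentum, say) integrates to the Duhamel formula $Y^\varepsilon_t = \varepsilon^{-1}\int_0^t e^{-\varepsilon^{-2}M(t-s)}\dot\gamma_s\,ds$. Integrating by parts in this expression, using $\dot\gamma_s\,ds = d\gamma_s$, one gets $\varepsilon Y^\varepsilon_t = \gamma_t - \int_0^t \varepsilon^{-2} M e^{-\varepsilon^{-2}M(t-s)} \gamma_s\, ds = \int_0^t \varepsilon^{-2} M e^{-\varepsilon^{-2}M(t-s)}(\gamma_t - \gamma_s)\,ds$, after inserting $\int_0^t \varepsilon^{-2}Me^{-\varepsilon^{-2}M(t-s)}\,ds\,\gamma_t = (I - e^{-\varepsilon^{-2}Mt})\gamma_t$ and noting the boundary term $e^{-\varepsilon^{-2}Mt}\gamma_t$ is exponentially small. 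The identity $M X^\varepsilon_{0,t} = \gamma_{0,t} - \varepsilon Y^\varepsilon_{0,t}$ (integrating the position equation, exactly as in the main proof) then reduces everything to controlling $\varepsilon Y^\varepsilon$ in H\"older norm.

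The key estimate is therefore $\sup_{t}|\varepsilon Y^\varepsilon_t| \to 0$ and, more quantitatively, a H\"older bound on $\varepsilon Y^\varepsilon_{s,t}$. From the formula above and $|\exp(-\tau M)| = O(\exp(-\lambda\tau))$, together with $|\gamma_t - \gamma_s| \le \|\gamma\|_\alpha |t-s|^\alpha$, the change of variable $u = \varepsilon^{-2}(t-s)$ gives $|\varepsilon Y^\varepsilon_t| \le C\|\gamma\|_\alpha \int_0^\infty u\, e^{-\lambda u}(\varepsilon^2 u)^\alpha\,du \cdot(\text{const}) = O(\varepsilon^{2\alpha})$, so in particular $\varepsilon Y^\varepsilon \to 0$ uniformly. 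For the H\"older increment one writes $\varepsilon Y^\varepsilon_{s,t} = \varepsilon Y^\varepsilon_t - \varepsilon Y^\varepsilon_s$ and estimates it in two ways: trivially by $2\sup|\varepsilon Y^\varepsilon|= O(\varepsilon^{2\alpha})$, and directly (splitting the Duhamel integrals, again using the exponential decay of the semigroup and $\alpha$-H\"older continuity of $\gamma$) by $C\|\gamma\|_\alpha |t-s|^\alpha$. Interpolating these two bounds with weights $\theta$ and $1-\theta$ yields, for any $\beta < \alpha$, a bound $|MX^\varepsilon_{0,t} - \gamma_{0,t}|_{\beta\text{-H\"ol}} \le C \varepsilon^{2(\alpha-\beta)} \to 0$, which is the first claim.

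For the second claim, when $\alpha > 1/2$ the paths $MX^\varepsilon$ and $\gamma$ all lie in $C^\beta$ with $\beta > 1/2$ (choosing $\beta \in (1/2,\alpha)$), so their iterated integrals are canonically defined as Young integrals and depend continuously on the path in $\beta$-H\"older norm — this is the standard Young-integral stability estimate (e.g.\ $\big|\int Y\,dY - \int Z\,dZ\big| \lesssim (\|Y\|_\beta + \|Z\|_\beta)\|Y-Z\|_\beta$ on a fixed interval, and an analogous local bound for the increments). Feeding the $\beta$-H\"older convergence $MX^\varepsilon \to \gamma$ just established into this estimate gives convergence of $\int_0^\cdot MX^\varepsilon_{0,t}\otimes d(MX^\varepsilon_t)$ to $\int_0^\cdot \gamma_{0,\cdot}\otimes d\gamma$ uniformly, and in fact in $\beta$-H\"older rough path metric (the rough path here being the trivial Young lift). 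I expect the only mildly delicate point to be bookkeeping the interpolation exponents so that $\beta$ can be taken arbitrarily close to $\alpha$; everything else is a routine Duhamel-plus-semigroup-decay computation together with off-the-shelf Young integration theory, and crucially no probability is used — the argument is entirely deterministic, which is exactly the point of Remark~\ref{rmkFBM} that Brownian roughness ($\alpha \approx 1/2$, \emph{not} better) is what makes the area correction in Theorem~\ref{thm:main} nonzero.
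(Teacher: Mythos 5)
Your proof is correct and follows the same overall strategy as the paper: identify the error $z=\gamma_{0,\cdot}-Mx_{0,\cdot}$ (which, in your notation, is exactly $\varepsilon Y^{\varepsilon}_{0,\cdot}$), show it converges to zero pointwise at a polynomial rate in $m$, establish uniform H\"older bounds, interpolate to get $\beta$-H\"older convergence for $\beta<\alpha$, and then invoke continuity of Young integration for the statement about the iterated integrals. The difference is purely in how the pointwise and H\"older bounds on $z=\varepsilon Y^{\varepsilon}$ are produced. The paper keeps the Young integral $\int_s^t e^{-\frac{M}{m}(t-r)}d\gamma_r$ as such, proves a uniform-in-$m$ bound on the $1$-variation of $r\mapsto e^{-\frac{M}{m}r}$, and then optimises a cutoff $\delta$ (yielding $|z_t|\le Cm^{\beta}$ for any $\beta<\alpha$, with a logarithmic loss). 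You instead integrate by parts once, converting the Young integral into an ordinary Lebesgue integral $\int_0^t\varepsilon^{-2}Me^{-\varepsilon^{-2}M(t-s)}(\gamma_t-\gamma_s)\,ds$ against H\"older increments, and a change of variables then gives $|z_t|=O(\varepsilon^{2\alpha})=O(m^{\alpha})$ directly — a marginally sharper rate with no cutoff to optimise and no need for the $1$-variation lemma. Both work; yours is arguably the cleaner route for the pointwise estimate, while the paper's version sidesteps IBP by pricing in the BV structure of the kernel.

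Two small points to tidy. First, after IBP the boundary term is $e^{-\varepsilon^{-2}Mt}\gamma_{0,t}$; it is not "exponentially small" uniformly in $t$ — near $t=0$ it behaves like $t^{\alpha}e^{-\lambda t/\varepsilon^{2}}=O(\varepsilon^{2\alpha})$ — but that is of the same order as the main term, so the conclusion stands; just say $O(\varepsilon^{2\alpha})$. Second, the "direct" $|t-s|^{\alpha}$ H\"older estimate on $\varepsilon Y^{\varepsilon}_{s,t}$ is stated rather tersely: the clean way to see it from your formula is to use the semigroup relation $\varepsilon Y^{\varepsilon}_{s,t}=(e^{-\varepsilon^{-2}M(t-s)}-I)\varepsilon Y^{\varepsilon}_s+\int_s^t e^{-\varepsilon^{-2}M(t-r)}d\gamma_r$ (which is what the paper writes for $z$), integrate by parts on $[s,t]$ to get $\gamma_{s,t}-\int_s^t\varepsilon^{-2}Me^{-\varepsilon^{-2}M(t-r)}\gamma_{s,r}\,dr=O(|t-s|^{\alpha})$, and bound the first term by $C\bigl(\varepsilon^{-2}(t-s)\wedge 1\bigr)\varepsilon^{2\alpha}\le C(t-s)^{\alpha}$. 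With these two details filled in, the argument is complete and, modulo the IBP-versus-$1$-variation packaging, coincides with the paper's.
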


\begin{proof}
By interpolation, it is enough to establish pointwise convergence of $%
Mx_{0,\cdot }$ to $\gamma _{0,\cdot }$ in conjunction with uniform $\alpha $%
-H\"{o}lder bounds. Equivalently, we want to show that, pointwise and with
uniform $\alpha $-H\"{o}lder bounds,%
\begin{equation*}
z:=\gamma _{0,\cdot }-Mx_{0,\cdot }\rightarrow 0\text{ as }m\rightarrow 0.
\end{equation*}%
Note that%
\begin{equation*}
dz_{t}=-\frac{M}{m}z_{t}dt+d\gamma _{t}
\end{equation*}%
from which we see, writing $z_{s,t}=z_{t}-z_{s}$ as usual, 
\begin{equation*}
z_{s,t}=(e^{-\frac{M}{m}(t-s)}-I)z_{s}+\int_{s}^{t}e^{-\frac{M}{m}%
(t-r)}d\gamma _{r}.
\end{equation*}%
The last integral is a Young (actually Riemann-Stieltjes) integral, for its
integrand has finite variation. To see this, note that for $s\leq t$, $%
|e^{-tM}-e^{-sM}|\leq Ce^{-\lambda s}\left( (t-s)\wedge 1\right) $ where $%
\lambda $ is, as in previous sections, a lower bound on the real part of the
spectrum of $M$. We may then estimate, for any subdivision $0\leq s_{0}\leq
\ldots \leq s_{N}<\infty $ 
\begin{eqnarray*}
\sum_{i}|e^{-s_{i+1}M}-e^{-s_{i}M}| &\leq &C\sum_{i}e^{-\lambda s_{i}}\left(
(s_{i+1}-s_{i})\wedge 1\right) \\
&\leq &C\sum_{n=0}^{\infty }e^{-\lambda n}\sum_{i}\mathbf{1}_{\{n\leq
s_{i}<n+1\}}\left( (s_{i+1}-s_{i})\wedge 1\right) \\
&\leq &2C\sum_{n=0}^{\infty }e^{-\lambda n}\;\;<\;\;\infty .
\end{eqnarray*}%
In particular, it follows that%
\begin{equation*}
\sup_{0<m\leq 1}\left\Vert e^{-\frac{M}{m}\cdot }\right\Vert _{1\text{-var}%
;[0,T]}<\infty .
\end{equation*}%
We now address pointwise convergence. Since $z_{0}=0$ we can estimate,
whenever $0<\delta \leq t\leq T$, 
\begin{eqnarray*}
|z_{t}| & = & \left\vert \int_{0}^{t}e^{-\frac{M}{m}(t-r)}d\gamma
_{r}\right\vert%
\leq e^{-\frac{\lambda }{m}\delta }\left\vert \int_{0}^{t-\delta }e^{-%
\frac{M}{m}((t-\delta )-r)}d\gamma _{r}\right\vert +\left\vert
\int_{t-\delta }^{t}e^{-\frac{M}{m}(t-r)}d\gamma _{r}\right\vert \\
&\leq &Ce^{-\frac{\lambda }{m}\delta }\Vert \gamma \Vert _{\alpha \text{-H%
\"{o}l;}\left[ 0,T\right] }T^{\alpha }\left( 1+\left\Vert e^{-\frac{M}{m}%
\cdot }\right\Vert _{1-var;[0,T]}\right) \\
&&+C\Vert \gamma \Vert _{\alpha \text{-H\"{o}l;}\left[ 0,T\right] }\delta
^{\alpha }\left( 1+\left\Vert e^{-\frac{M}{m}\cdot }\right\Vert _{1\text{-var%
};[0,T]}\right) \\
&\leq &C\left( e^{-\frac{\lambda }{m}\delta }+\delta ^{\alpha }\right) ,
\end{eqnarray*}%
where $C$ is a constant which does not depend on $m$. Taking%
\begin{equation*}
\delta =\frac{\alpha m}{\lambda \log (1/m)}
\end{equation*}%
one sees that 
\begin{equation*}
|z_{t}|\leq Cm^{\alpha }\left( 1+|\log m|\right) ^{\alpha }\;\;\;\leq
\;\;\;Cm^{\beta },
\end{equation*}%
which in particular gives us pointwise convergence. As for uniform H\"{o}%
lder bounds, take $s\leq t$ so that 
\begin{equation*}
z_{s,t}=\int_{s}^{t}e^{-\frac{M}{m}(t-r)}d\gamma _{r}+(e^{-\frac{M}{m}%
(t-s)}-I)z_{s}.
\end{equation*}%
As before, the integral term is bounded by $C(t-s)^{\alpha }$. For the other
term, note that 
\begin{eqnarray*}
\left\vert (e^{-\frac{M}{m}(t-s)}-I)z_{s}\right\vert &\leq &C\left( \frac{t-s%
}{m}\wedge 1\right) |z_{s}| \\
&\leq &C\left( \frac{t-s}{m}\right) ^{\beta }|z_{s}|\;\;\;\leq
\;\;\,C(t-s)^{\beta },
\end{eqnarray*}%
where we have used the previous point wise estimate on $z$ in the last
inequality. This proves that the paths $z$ are uniformly $\beta $-H\"{o}lder
continuous and finishes the proof.
\end{proof}

The above proposition shows, for instance, that replacing Brownian motion in
our main theorem by \textit{fractional} Brownian motion with Hurst parameter 
$H>1/2$ will not allow for a similar statement with non-trivial stochastic
area correction. (It is recalled that fractional Brownian motion has H\"{o}%
lder continuous sample path with exponent arbitrarily close to $H$.) Let us,
finally and briefly, discuss a similar statement when Brownian motion in our
main theorem is replaced by a finite energy path; that is, a path%
\begin{equation*}
\gamma :\left[ 0,T\right] \rightarrow \mathbb{R}^{n}.
\end{equation*}%
which may be written as indefinite integral of some function in $L^{2}\left( %
\left[ 0,T\right] ,\mathbb{R}^{n}\right) $, which we shall call $\dot{\gamma}
$. By Cauchy-Schwarz, such finite energy paths are guaranteed to be $1/2$-H%
\"{o}lder but, in general, one does not have better H\"{o}lder regularity.
In particular, since the area is not continuous in $1/2$-H\"{o}lder
topology, the above proposition just about fails to cover finite energy
paths. A direct argument, however, is not difficult. As in the above proof,
we set 
\begin{equation*}
z:=\gamma _{0,\cdot }-Mx_{0,\cdot }
\end{equation*}%
and note from the previous argument $|z_{t}|\leq Cm^{1/2}\left( 1+|\log
m|\right) ^{1/2}$, uniformly over $t\in \left[ 0,T\right] $. We then write$%
\; $%
\begin{equation*}
\dot{z}_{t}=-\frac{M}{m}z_{t}+\dot{\gamma}_{t}
\end{equation*}%
and take the scalar product in $\mathbb{R}^{n}$ with $\dot{z}_{t}$,
following by integration over $\left[ 0,T\right] $ to see that%
\begin{eqnarray*}
\int_{0}^{T}\left\vert \dot{z}_{t}\right\vert ^{2}dt &=&-\frac{M}{m}\frac{1}{%
2}\left\vert z_{T}\right\vert ^{2}+\int_{0}^{T}\left\langle \dot{\gamma}_{t},%
\dot{z}_{t}\right\rangle dt \\
&\leq &\frac{1}{2}\int_{0}^{T}\left\vert \dot{\gamma}_{t}\right\vert ^{2}dt+%
\frac{1}{2}\int_{0}^{T}\left\vert \dot{z}_{t}\right\vert ^{2}dt.
\end{eqnarray*}%
This implies a uniform (in $m$) $L^{2}$-bound on $\dot{z}$. This of course
implies a uniform $L^{1}$-bound on $\dot{z}$ and thus a uniform $1$%
-variation bound on $z$. Knowning that $z$ converges to zero uniformly on $%
\left[ 0,T\right] $, it now follows from interpolation that this
convergences also takes place in $p$-variation, for any $p>1$. Now, the area
is a continuous function of the underlying paths in $p$-variation as along
as $p<2$ and so we can conclude: replacing Brownian motion in our main
theorem by a finite energy (also known as Cameron--Martin) path will not
allow for a similar statement with non-trivial stochastic area correction.

\section{Appendix: elements of rough path theory}

A rough path on an interval $[0,T]$ with values in a Banach space $V$ consists of a continuous function $X\colon \lbrack 0,T]\rightarrow V$, as
well as a continuous \textquotedblleft second order
process\textquotedblright\ $\mathbb{X}\colon \lbrack 0,T]^{2}\rightarrow
V\otimes V$, subject to certain (i) algebraic and (ii) analytic conditions.
Towards (i), the behaviour of iterated integrals of smooth paths suggests to
impose the algebraic relation ("\textit{Chen's relation}"), 
$$ \XX_{s,t} - \XX_{u,t} -\XX_{s,u} =  X_{s,u}\otimes X_{u,t}\;,
$$
assumed to hold for every triple of times $(s,u,t)$. Since $%
X_{t,t}=0$, it immediately follows (take $s=u=t$) that we also have $\mathbb{%
X}_{t,t}=0$ for every $t$. One should think of $\mathbb{X}$ as \textit{%
postulating} the value of the quantity 
$$ \int_s^t X_{s,r}\otimes dX_r = XX_{s,t} \;,
$$ where we take the right hand side as a \textit{definition} for the
left hand side. We insist that knowledge of the path
$t\mapsto (X_{0,t},\mathbb{X}_{0,t})$ already determines the entire second order 
process $\mathbb{X}$. In this sense $(X,\mathbb{X})$ is indeed a path, and not some 
two-parameter object.

Note that the algebraic relations are by themselves not
sufficient to determine $\mathbb{X}$ as a function of $X$. Indeed, for any $%
V\otimes V$-valued function $F$, the substitution $\mathbb{X}_{s,t}\mapsto 
\mathbb{X}_{s,t}+F_{t}-F_{s}$ leaves the left hand side of the above algebraic 
relation invariant. We will see later on how one should interpret such a
substitution. The aim of imposing these algebraic relations is to ensure that %
$\XX$ does indeed have the basic addivity properties of any
(reasonable) integral when considering it over two adjacent intervals.

For $\alpha \in ({\frac{1}{3}},{\frac{1}{2}}]$, one can denote by $\DD%
^{\alpha }([0,T],V)$ the space of those rough paths $(X,\mathbb{X})$ such
that  $$ \|X\|_{\alpha} = \sup_{s \neq t \in [0,T]} {|X_{s,t}|\over |t-s|^{\alpha}} < \infty\;,\qquad
\|\XX\|_{2\alpha} =  \sup_{s \neq t \in [0,T]} {|\XX_{s,t}|\over |t-s|^{2\alpha}} < \infty\;.
$$ If one ignores the non-linear, algebraic constraint there is
a natural way to think of $(X,\mathbb{X})$ as element in the Banach space of
such maps with (semi-)norm $\Vert X\Vert _{\alpha }+\Vert \mathbb{X}\Vert
_{2\alpha }$. However, due to the non-linear algebraic relation $\DD%
^{\alpha }$ is not a linear space, although a closed subset of the
aforementioned Banach space. 
\begin{definition}
Given rough paths $\bX,\bY\in \DD^{\alpha }([0,T],V)$, we define the
(inhomogenous) $\alpha $-H\"{o}lder rough path metric 
\begin{equation*}
\rho _{\alpha }(\bX,\bY):=\sup_{s\neq t\in \lbrack 0,T]}{\frac{%
|X_{s,t}-Y_{s,t}|}{|t-s|^{\alpha }}}+\sup_{s\neq t\in \lbrack 0,T]}{\frac{|%
\mathbb{X}_{s,t}-\YY_{s,t}|}{|t-s|^{2\alpha }}}.
\end{equation*}%
{}
\end{definition}
Let us note that $\DD^{\alpha }([0,T],V)$ so becomes a complete, metric space. 
The perhaps cheapest way to show convergence with respect to this rough path
metric is based on interpolation: in essence, it is enough to establish
pointwise convergence in conjunction with uniform "rough path"  $\alpha$-H\"older 
bounds. 
We conclude this part with two important remarks. First, we can ask
ourselves up to which point the algebraic relations are already
sufficient to determine $\mathbb{X}$. Assume that we can associate to a
given function $X$ two different second order processes $\mathbb{X}$ and $%
\bar{\XX}$, and set $G_{s,t}=\mathbb{X}_{s,t}-\bar{\XX}_{s,t}$. It then
follows immediately that 
$$ G_{s,t} = G_{u,t} + G_{s,u}\;$$
so that in particular $G_{s,t}=G_{0,t}-G_{0,s}$. We conclude that
 $\mathbb{X}$ is in general determined
only up to the increments of some function $F$ with values in $(V\otimes V)$
and H\"older continuous with exponent $2\alpha$. The choice of $F$ \textit{does} usually matter and there is in general no
obvious canonical choice.

The second remark is that this construction can possibly be useful only if $%
\alpha \leq {\frac{1}{2}}$. Indeed, if $\alpha >{\frac{1}{2}}$, then a
canonical choice of $\mathbb{X}$ is given in terms of the Young integral.
Furthermore, it is clear in this case that $\mathbb{X}$ must
be unique, since any additional increment should be $2\alpha $-H\"{o}lder
continuous, which is of course only possible if $\alpha
\leq {\frac{1}{2}}$. This is however \textit{not} to say that $\mathbb{X}$
is uniquely determined by $X$ if the latter is smooth, when interpreted as
an element of $\DD^{\alpha }$. Indeed, if $\alpha \leq {\frac{1}{2}}$, $F$
is any $2\alpha $-H\"{o}lder continuous function with values in $V\otimes V$
and $\mathbb{X}_{s,t}=F_{t}-F_{s}$, then the path $(0,\mathbb{X})$ is a
perfectly \textquotedblleft legal\textquotedblright\ element of $\DD^{\alpha
}$, even though one cannot get any smoother than the function $0$. The
impact of perturbing $\mathbb{X}$ by some $F\in \mathcal{C}^{2\alpha }$ in
the context of differential equations and integration is dramatic: additional
drift terms in (Lie-bracket) directions can appear; the famous It\^{o}-Stratonovich 
correction is also understood from this picture. The reader may find (much) more
in \cite{LQ02, FVforth} and \cite{FH}.


\begin{thebibliography}{99}
\bibitem{BHL} R.F.Bass, B.M.Hambly and T.J.Lyons: Extending the Wong-Zakai
theorem to reversible Markov processes, J. Euro. Math. Soc. 4, 237-269, 2002

\bibitem{BLP} A. Bensoussan, J. L. Lions, and G. Papanicolaou. Asymptotic
Analysis for Periodic Structures, volume 5 of Studies in Mathematics and Its
Applications. North-Holland Publishing Co., Amsterdam, 1978. viii, xi, 1, 9,
10, 25, 47, 57, 58, 65, 113

\bibitem{CF} Cass T, Friz, P. , Densities for rough differential equations
under H rmander s condition, Annals of Mathematics, 2010, Vol:171.

\bibitem{CHLT} T. Cass, M. Hairer, C. Litterer, S. Tindel. Smoothness of the
density for solutions to Gaussian rough differential equations, arXiv
preprint 2012.

\bibitem{CL} Cass,T.; Lyons, T.: Interacting communities with indivdual
preferences (in preparation).

\bibitem{CFV} Laure Coutin, Peter Friz and Nicolas Victoir, Good Rough Path
Sequences and Applications to Anticipating Stochastic Calculus, Annals of
Probability , Vol. 35, No. 3 (May, 2007), pp. 1172-1193

\bibitem{FVGauss} Friz, Peter; Victoir, Nicolas: Differential Equations
Driven by Gaussian Signals, Annales de l'Institut Henri Poincare (B)
Probability and Statistics, May 2010, Vol. 46, No. 2, 369\^{a}\euro
\textquotedblleft 413. ISSN 0246-0203

\bibitem{FH} Friz Peter, Hairer Martin. A short course in rough path (in
preparation, 2013).

\bibitem{FVforth} Friz,\ P., Victoir, N.: Multidimensional stochastic
processes as rough paths. Theory and applications. Cambridge University
Press, Cambridge, 2010.

\bibitem{H2} M. Hairer, Solving the KPZ equation, Ann. Math, to appear.

\bibitem{H} Martin Hairer, Rough stochastic PDEs, Comm. Pure Appl. Math. 64
(2011), no. 11, 1547--1585.

\bibitem{HSV} M. Hairer, A. M. Stuart, and J. Voss, Analysis of SPDEs
arising in path sampling. II. The nonlinear case, Ann. Appl. Probab. 17
(2007), no. 5-6, 1657-1706

\bibitem{HL} B. Hambly and T.J. Lyons : Uniqueness for the signature of a
path of bounded variation and the reduced path group. \textit{Annals of
Mathematics}, 109-167, 2010.

\bibitem{HN} Hu, Y., Nualart, D. ; Rough path analysis via fractional
calculus; Trans. AMS 361 (2009) 2689-2718.

\bibitem{NH} Hao Ni, The Expected Signature of a Stochastic Process, PhD
Thesis, Oxford University 2012.

\bibitem{HN08} Hu, Y., Nualart, D.: Rough Path Analysis via Fractional
Calculus, Trans. Amer. Math. Soc. 361(5):2689-2718, 2009.

\bibitem{Kal} O. Kallenberg: Foundations of Modern Probability, Springer,
2nd ed. 2002

\bibitem{L98} T. Lyons, Differential equations driven by rough signals. Rev.
Mat. Iberoamericana 14 215--310, 1998.

\bibitem{LQ02} T.Lyons, Z.Qian: System control and rough paths, OUP\ 2002.

\bibitem{LVcub} T. Lyons and N. Victoir, Cubature on Wiener Space. \textit{%
Proc. R. Soc. Lond. A},169-198, 2004.

\bibitem{OU} L. S. Ornstein, G. E. Uhlenbeck: On the theory of Brownian
Motion. In: Physical Review. 36, 1930, 823\^{a}\euro \textquotedblleft 841.

\bibitem{PS} G. A. Pavliotis and A. M. Stuart. Multiscale Methods Averaging
and Homog- enization, volume 53 of Texts in Applied Mathematics. Springer,
New York, 2008. 24, 25, 47, 58, 115

\bibitem{Stroock} D. Stroock, Probability Theory, An Analytic View. CUP,
1993.
\end{thebibliography}
\end{document}